 \newtheorem{thm}{Theorem}[section]
 \newtheorem{lemma}[thm]{Lemma}
 \newtheorem{prop}[thm]{Proposition}
 \theoremstyle{definition}
 \newtheorem{defn}[thm]{Definition}
 \newtheorem{exmp}[thm]{Example}
 \theoremstyle{remark}
 \newtheorem{rem}[thm]{Remark}
 \newtheorem{question}[thm]{Question}
 \numberwithin{equation}{subsection}
 \newtheorem{ack}{Acknowledgment}
\newcommand{\UU}{\text{$\mathcal{U}$}}
\newcommand{\FF}{\text{$\mathcal{F}$}}
\newcommand{\GG}{\text{$\mathcal{G}$}}
\newcommand{\sg}{\text{$\sigma$}}
\newcommand{\Cat}{\operatorname{Cat}}
\newcommand{\Crit}{\operatorname{Crit}}
\newcommand{\id}{\operatorname{id}}
        \newcommand{\field}[1]{\text{$\mathbb{#1}$}}
        \newcommand{\N}{\field{N}}
        \newcommand{\Z}{\field{Z}}
        \newcommand{\R}{\field{R}}
\newdimen\theight
\def\TeXref#1{%
             \leavevmode\vadjust{\setbox0=\hbox{{\tt
                     \quad\quad  {\small \textrm #1}}}%
             \theight=\ht0
             \advance\theight by \lineskip
             \kern -\theight \vbox to
             \theight{\rightline{\rlap{\box0}}%
             \vss}%
             }}%
\begin{document}

\title{Tangential category and critical point theory}

\author{Carlos Meni\~no Cot\'on}

\address{Departamento de Xeometr\'{\i}a e Topolox\'{\i}a\\
         Facultade de Matem\'aticas\\
         Universidade de Santiago de Compostela\\
         15782 Santiago de Compostela}

\email{carlos.meninho@gmail.com}

\thanks{Supported by MICINN (Spain): FPU program and Grant MTM2008-02640}


\begin{abstract}
Classical Ljusternik-Schnirelmann category is upper bounded by the number of critical points of any bounded from below differentiable functions of Palais-Smale type. Here we achieve an adaptation of this result for the tangential category of foliations. We introduce a weaker type of Palais-Smale function, obtaining a slight improvement in the classical theorem of critical points.
\end{abstract}

\maketitle

\section{Introduction}

The LS category is a homotopy invariant given by the minimum number of
open subsets contractible within a topological space needed to
cover it. It was introduced by L.~Lusternik and L.~Schnirelmann in 1930's in the setting of variational problems. Many variations of this invariant has been given. In
particular, H.~Colman and E.~Mac\'ias introduced a tangential version for
foliations, where they use leafwise contractions to transversals
\cite{HellenColman, Macias}. 

Many of the properties of the usual category (see \cite{James}) were adapted to tangential category like lower bounds given by using cuplengths of rings in algebraic topology, a global upper bound given by the dimension of the foliation or a upper semicontinuity property when it is evaluated in a space of foliations on a fixed compact manifold \cite{Macias,Hurder,Vogt-Singhof}. However the classical theorem relating LS category with the number of critical points of a differentiable function is still missing \cite{Schwartz}.

In this paper we extend the definition of tangential category to Hilbert laminations. We observe that under general conditions we can embed this kind of laminations into a global Hilbert space. Therefore, we get some control in the transverse topology.

Considering a leafwise differentiable function, we restrict to the case of functions with leafwise isolated critical points. So the set of critical points is a transverse object. In this moment we define the critical sets associated to the function. They are related by a order relation given by the leafwise gradient flow induced by the function.

Remark that the classical theorem of J. Schwartz \cite{Schwartz} holds for bounded from below Palais-Smale functions. In this paper we introduce a new weaker notion of Palais-Smale map introducing topological restrictions on the critical sets.

In addition to its own interest, it si expected applications to variational problems \cite{Ballman,Lusternik-Schnirelmann}.

\section{Definition of the tangential category of laminations}

We refer to \cite{Candel-Conlon} for the basic notion and definitions about laminations, foliated chart, foliated atlas, holonomy pseudogroup and transverse invariant measure. They are recalled here in order to fix notations.

A {\em Polish space\/} is a second countable and complete metrizable space. Let $X$ be a Polish space. A {\em foliated chart\/} in $X$ is a pair $(U,\varphi)$ such that $U$ is an open subset of $X$ and $\varphi:U\to B^n\times S$ is an homeomorphism, where $B^n$ is an open ball of $\R^n$ and $S$ is a Polish space. The sets $B^n\times\{\ast\}$ are called the {\em plaques\/} of the chart, and the sets of the form $\varphi^{-1}(\{\ast\}\times S)$ are called the {\em associated transversals\/}. The map $U\to S$ is called the projection associated to $(U,\varphi)$. A {\em foliated atlas\/} is a family of foliated charts, $\{(U_i,\varphi_i)\}_{i\in I}$, that covers $X$ and the change of coordinates between the charts preserves the plaques; i.e., they are locally of the form $\varphi_i\circ\varphi_j^{-1}(x,s)=(f_{ij}(x,s),g_{ij}(s))$; these maps $g_{ij}$ form the holonomy cocycle associated to the foliated atlas. A {\em lamination\/} $\FF$ on $X$ is a maximal foliated atlas satisfying the above hypothesis. The plaques of the foliated charts of a maximal foliated atlas form a base of a finer topology of $X$, called the {\em leaf topology\/}. The connected components of the leaf topology are called the {\em leaves\/} of the foliation. The {\em dimension\/} of the lamination is the dimension of the plaques when all of them are open sets of the same Euclidean space.

A foliated atlas, $\UU=\{(U_i,\varphi_i)\}_{i\in \N}$, is called {\em regular\/} if it satisfies the following properties:
\begin{enumerate}
  \item [(a)] It is locally finite.
  \item [(b)] If a plaque $P$ of any foliated chart $(U_i,\varphi_i)$ meets another foliated chart $(U_j,\varphi_j)$, then $P\cap U_j$ is contained in only one plaque of $(U_j,\varphi_j)$.
  \item [(c)] If $U_i\cap U_j\neq \emptyset$, then there exists a foliated chart $(V,\psi)$ such that $U_i\cup U_j\subset V$, $\varphi_i=\psi|_{U_i}$ and $\varphi_j=\psi|_{U_j}$.
\end{enumerate}
Any topological lamination admits a regular foliated atlas, also we can assume that all the charts are locally compact. For a regular foliated atlas $\UU=\{(U_i,\varphi_i)\}_{i\in \N}$ with $\varphi_i:U_i\to B_{i,n}\times S_i$, the maps $g_{ij}$ generate a pseudogroup on $\bigsqcup_iS_i$. Holonomy pseudogroups defined by different foliated atlases are equivalent in the sense of \cite{Haefliger}, and the corresponding equivalence class is called the {\em holonomy pseudogroup\/} of the lamination; it contains all the information about its transverse dynamics. A {\em transversal\/} $T$ is a topological set of $X$ so that, for every foliated chart, $(U,\varphi)$, the corresponding projection restricts to a local homeomorphism $U\cap T\to S$. A transversal is said to be {\em complete\/} if it meets every leaf. On any complete transversal, there is a representative of the holonomy psudogroup which is equivalent to the representative defined by any foliated atlas via the projection maps defined by its charts.

\begin{rem}
Observe that the tangential model of the charts could be changed in order to give a more general notion of lamination: instead of taking open balls of $\R^n$ as $B^n$, we could take connected and locally contractible Polish spaces or separable Hilbert spaces. Also, it is possible to define the notion of $C^r$ foliated structure by assuming that the tangential part of changes of coordinates are $C^r$, with the leafwise derivatives of order $\le r$ depending continuously on the transverse coordinates. We can speak about regular atlases in Hilbert laminations but we cannot assume that its foliated charts are locally compact.
\end{rem}

Let us recall the definition of tangential category \cite{HellenColman,Macias}.
A lamination $(X,\FF)$ induces a foliated
measurable structure $\FF_U$ in each open set $U$. The space $U\times\R$ admits an
obvious foliated structure $\FF_{U\times\R}$ whose leaves are
products of leaves of $\FF_U$ and $\R$. Let $(Y,\GG)$ be another measurable lamination. A foliated map $H:\FF_{U\times\R}\to \GG$ is called a {\em tangential
homotopy\/}, and it is said that the maps $H(\cdot,0)$ and $H(\cdot,1)$ are {\em tangentially homotopic\/}. We use the term {\em
tangential deformation\/} when $\GG=\FF$ and $H( -
,0)$ is the inclusion map of $U$. A deformation such that $H( - ,1)$ is constant on the leaves of
$\FF_U$ is called a {\em tangential contraction\/} or an \FF-{\em
contraction\/}; in this case, $U$ is called a {\em tangentially categorical\/} or \FF-{\em categorical\/} open set. The {\em tangential category\/} is the lowest number of
categorical open sets that cover the measurable lamination. On one leaf foliations, this definition agrees with the classical category. The category of \FF\ is
denoted by $\Cat(\FF)$. It is clear that it is a tangential homotopy invariant.

Now, we introduce the relative category that will be useful for further applications.

\begin{defn}
Let $U\subset X$ be an open subset. The {\em relative category\/} of $U$, $\Cat(U,\FF)$, is the minimum number of \FF-categorical open sets in $X$ that cover $U$.
\end{defn}

\begin{rem}
Clearly, $\Cat(U,\FF)\leq\Cat(\FF_{U})$.
\end{rem}

\begin{prop}[Subadditivity of the relative category]\label{p:subaddtivity of the relative category}
Let $\{U_i\}_{i\in\N}$ be a countable family of open subsets of $X$. Then
\[
  \Cat\left(\bigcup_iU_i,\FF\right)\leq\sum_i\Cat(U_i,\FF)\;.
\]
\end{prop}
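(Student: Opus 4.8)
The plan is to read the statement directly off the definition of the relative category. The crucial point is that $\Cat(U,\FF)$ counts $\FF$-categorical open sets \emph{of the ambient space $X$} that cover $U$, not categorical open sets of $U$ itself; consequently a categorical cover of $\bigcup_i U_i$ is obtained simply by taking the union of categorical covers of the individual pieces $U_i$, with no gluing of the tangential contractions, no compatibility condition on overlaps, and no patching required.

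First I would dispose of the case $\sum_i\Cat(U_i,\FF)=+\infty$, in which the inequality is vacuous. So assume the right-hand side is finite; then each $\Cat(U_i,\FF)$ is finite, and since $\Cat(U,\FF)=0$ precisely when $U=\emptyset$ (the empty family of categorical open sets covers $\emptyset$, and conversely a set covered by no open sets is empty), all but finitely many of the $U_i$ are empty, so the sum is effectively a finite one. For each $i$ with $k_i:=\Cat(U_i,\FF)\geq 1$, choose, by the very definition of $\Cat(U_i,\FF)$, $\FF$-categorical open subsets $W_{i,1},\dots,W_{i,k_i}$ of $X$ with $U_i\subset\bigcup_{j=1}^{k_i}W_{i,j}$. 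The resulting family $\{W_{i,j}\}$ is a finite collection of $\FF$-categorical open subsets of $X$, it covers $\bigcup_i U_i$, and it has at most $\sum_i k_i$ members; since $\Cat\!\left(\bigcup_i U_i,\FF\right)$ is by definition the least cardinality of such a collection, the desired inequality $\Cat\!\left(\bigcup_i U_i,\FF\right)\leq\sum_i\Cat(U_i,\FF)$ follows.

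I do not expect a genuine obstacle: once the relative category is defined through categorical open sets of $X$, subadditivity is a formal consequence of the definitions, and the only point that needs a moment of care is the bookkeeping with the value $+\infty$ — namely, recording that $\Cat(U,\FF)=0$ exactly when $U=\emptyset$, so that a finite right-hand side forces all but finitely many summands to vanish and hence makes the cover assembled above finite. Everything else is immediate.
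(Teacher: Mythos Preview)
Your argument is correct and is precisely the straightforward verification from the definition that the paper has in mind; in fact the paper states this proposition without proof, treating it as immediate. Your handling of the bookkeeping (the $+\infty$ case and the observation that $\Cat(U,\FF)=0$ iff $U=\emptyset$) is the only thing there is to say.
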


The following result is about the structure of a tangentially categorical open set.

\begin{lemma}[Singhof-Vogt~\cite{Vogt-Singhof}]\label{l:vogt0}
Let $\FF$ be a foliation of dimension $m$ and codimension $n$ on a
manifold $M$, let $U$ be an \FF-categorical open set, let $x\in
U$, let $D\subset U$ be a transverse manifold of dimension $n$, and
suppose that $x$ belongs to the interior of $D$. Then there exists
a neighborhood $E$ of $x$ in $D$ such that any leaf of $\FF_{U}$
meets $E$ in at most one point.
\end{lemma}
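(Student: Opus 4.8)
The plan is to use the contraction to convert the transverse injectivity we want into the injectivity of a holonomy germ. Fix a tangential contraction of $U$: a continuous leaf-preserving map $H\colon U\times\R\to M$ with $H(\cdot,0)$ the inclusion $U\hookrightarrow M$ and $\Phi:=H(\cdot,1)$ constant on the leaves of $\FF_{U}$. For $y\in U$ write $\gamma_{y}\colon t\mapsto H(y,t)$, $t\in[0,1]$; this is a path lying in the leaf $L_{y}$ of $\FF$ through $y$ (continuous in the leaf topology, since $H$ is foliated), and set $z:=\Phi(x)=\gamma_{x}(1)$. I would first reduce the statement to showing that \emph{$\Phi$ is injective on some neighbourhood $E$ of $x$ in $D$}: if a leaf of $\FF_{U}$ met such an $E$ in two distinct points, those points would lie on one leaf of $\FF_{U}$ and hence be identified by $\Phi$, which is impossible; so that leaf would meet $E$ in at most one point, as required.

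Next I would prove local injectivity of $\Phi|_{D}$ at $x$ by transporting $D$ along $\gamma_{x}$ via holonomy. Since $D$ is transverse to $\FF$ of dimension $n$ (the codimension of $\FF$), after shrinking $D$ I may assume $D\subset W$ for a foliated chart $W$ about $x$ whose projection $\pi_{W}$ restricts to an injection on $D$ — indeed a homeomorphism onto an open subset of the transversal of $W$, by invariance of domain. Fix also a foliated chart $W'$ about $z$ with projection $\pi'$. Cover the compact arc $\gamma_{x}([0,1])$ by finitely many charts $W=V_{0},V_{1},\dots,V_{k}=W'$ of a regular atlas, and choose a subdivision $0=s_{0}<s_{1}<\dots<s_{k}=1$ with slack, so that $\gamma_{x}([s_{i}-\varepsilon,s_{i+1}+\varepsilon])\subset V_{i}$ for each $i$. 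The composite of the elementary holonomy transition maps $g_{V_{i+1}V_{i}}$ is then the holonomy germ $h$ of $\gamma_{x}$, a homeomorphism from a neighbourhood of $\pi_{W}(x)$ onto a neighbourhood of $\pi'(z)$; in particular it is injective.

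The heart of the proof is the usual \emph{continuity of holonomy}. Using joint continuity of $H$ and compactness of $[0,1]$, I would find a neighbourhood $E$ of $x$ in $D$ so small that every path $\gamma_{y}$ with $y\in E$ still satisfies $\gamma_{y}([s_{i},s_{i+1}])\subset V_{i}$ for all $i$. Each segment $\gamma_{y}|_{[s_{i},s_{i+1}]}$ is connected in the leaf topology and contained in $V_{i}$, hence lies in a single plaque of $V_{i}$; iterating the regularity property that a plaque meeting a neighbouring chart lies in one plaque of it, one checks that $\gamma_{y}$ visits in turn the plaques of $V_{0},\dots,V_{k}$ lying over $\pi_{W}(y)$, $g_{V_{1}V_{0}}(\pi_{W}(y))$, $\dots$, $h(\pi_{W}(y))$. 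In particular $\Phi(y)=\gamma_{y}(1)$ lies in $W'$, in the plaque over $h(\pi_{W}(y))$, so that $\pi'\circ\Phi|_{E}=h\circ\pi_{W}|_{E}$. Since the right-hand side is a composite of injective maps, $\Phi|_{E}$ is injective, and the reduction of the first paragraph yields the lemma. I expect the only real obstacle to be making the ``same chart itinerary for all $y\in E$'' claim precise and uniform; for a regular foliated atlas this is a routine compactness argument, and the remaining bookkeeping with plaques is formal.
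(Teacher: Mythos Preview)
The paper does not prove this lemma; it simply quotes it from Singhof--Vogt \cite{Vogt-Singhof} and remarks afterwards that the same proof carries over to Hilbert laminations. Your argument is correct and is essentially the original Singhof--Vogt proof: reduce to local injectivity of the time-$1$ map $\Phi$ on the transversal $D$, then use compactness of $\gamma_{x}([0,1])$ together with a regular atlas to force every nearby path $\gamma_{y}$ through the same chain of plaques, so that $\pi'\circ\Phi|_{E}=h\circ\pi_{W}|_{E}$ with $h$ the holonomy germ along $\gamma_{x}$, which is a local homeomorphism.
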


Observe that this lemma extends to the case of Hilbert laminations with the same proof. Hence, if the ambient space is separable, the final step of a tangential contraction is a countable union of local transversals of the foliation.

For the known properties, bounds and computations (in finite dimension) for the tangential category we refer to \cite{Macias,Hurder,Vogt-Singhof}.

\section{Previous facts to the main theorem}

We work in the following with Hilbert  laminations. Thus, the foliated charts are homeomorphisms to $H\times P$, where $H$ is a separable Hilbert space and $P$ is a Polish space. The ambient space of the foliation is a Polish space, and therefore we can work with countable foliated atlases. We hope that the work of this section will be useful to study laminated versions of variational problems where the classical Lusternik-Schnirelmann category was applied .

Moreover we consider $C^2$ Hilbert laminations; i.e., the change of foliated coordinates are leafwise $C^2$ whose tangential derivatives of order $\le2$ are continuous on the ambient space. Also, any lamination is assumed to have a locally finite atlas such that each plaque of every chart meets at most one plaque of any other chart. We consider a leafwise Riemannian metric  so that its tangential derivatives of order $\le2$ are continuous on the ambient space; thus each leaf becomes a Riemannian Hilbert manifold. Of course, the holonomy pseudogroup makes sense in this set-up. Here, an open transversal is an embedded space that is locally homeomorphic to a transversal associated to a foliated chart via a projection map.

We consider functions that are $C^2$ on the leaves whose tangential derivatives of order $\le2$ are continuous on the ambient space. The functions satisfying the above property are called $C^2$, and they form a linear space denoted by $C^{2}(\FF)$. For a function $f\in C^{2}(\FF)$, we set $\Crit_{\FF}(f)=\bigcup_{L\in\FF}\Crit(f|_L)$. The definition of {\em $C^r$ Hilbert laminations\/} and their $C^r$ functions are analogous.

Since any separable Hilbert space admits $C^\infty$ partitions of unity, the transverse model is paracompact and Hausdorff and we consider a locally finite atlas, we have the following.

\begin{prop}\label{p:existence of a partition of 1}
Every open cover of a Hilbert lamination of class $C^k$ admits a subordinate partition of unity of class $C^k$.
\end{prop}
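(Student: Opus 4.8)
The plan is to run the classical construction of a subordinate partition of unity, the only novelty being to keep every function produced inside $C^k(\FF)$, i.e.\ leafwise $C^k$ with tangential derivatives of order $\le k$ continuous on $X$.

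First I would reduce to a convenient cover. In any foliated chart $\varphi\colon U\to H\times P$ the preimages $\varphi^{-1}(B\times Q)$ of ``boxes'' (with $B$ an open ball of $H$, which is itself homeomorphic to $H$, and $Q$ open in $P$) are again domains of foliated charts and form a neighborhood basis, so the chart domains form a basis for the topology of $X$. As $X$ is Polish it is second countable and paracompact, hence the given cover $\{W_\alpha\}$ has a countable locally finite refinement $\{V_i\}_{i\in\N}$ by chart domains $\varphi_i\colon V_i\to H\times P_i$ with $V_i\subset W_{\alpha(i)}$. By the shrinking lemma (normality of the metrizable space $X$ together with local finiteness) fix open covers $\{V_i''\}$ and $\{V_i'\}$ of $X$ with $\overline{V_i''}\subset V_i'$ and $\overline{V_i'}\subset V_i$.

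The core step is to build, for each $i$, a function $g_i\in C^k(\FF)$ with $g_i\ge0$, $g_i>0$ on $\overline{V_i''}$ and $\{g_i\neq0\}\subset V_i'$. Reading everything in the model $H\times P_i$, put $A_i=\varphi_i(\overline{V_i''})$ (closed) and $\Omega_i=\varphi_i(V_i')$ (open), so $A_i\subset\Omega_i$. Since $H\times P_i$ is second countable, $A_i$ is Lindel\"of, so I can cover it by countably many boxes $B_\ell\times Q_\ell$ whose concentric enlargements $\widehat B_\ell\times\widehat Q_\ell$ have closure contained in $\Omega_i$. On $H$ take a $C^\infty$ bump $\beta_\ell$, positive on $B_\ell$ and supported in $\widehat B_\ell$ (this is where the smooth paracompactness of the separable Hilbert space $H$ enters; concretely $\beta_\ell(h)=\theta(\|h-h_\ell\|^2)$ for a suitable $\theta$ works); on the metrizable space $P_i$ take a continuous bump $\gamma_\ell$, positive on $Q_\ell$ and supported in $\widehat Q_\ell$. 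Each product $\phi_\ell(h,p)=\beta_\ell(h)\gamma_\ell(p)$ lies in $C^k(\FF_{V_i})$ and has tangential derivatives of order $\le k$ bounded on $H\times P_i$. Here is the one genuine obstacle: $A_i$ need not be compact (we are in infinite dimension), so the box cover cannot be taken locally finite and $\sum_\ell\phi_\ell$ need not converge. I get around this by setting $g=\sum_\ell 2^{-\ell}M_\ell^{-1}\phi_\ell$, where $M_\ell=\max_{0\le j\le k}\sup_{H\times P_i}\|\nabla_h^{\,j}\phi_\ell\|\in(0,\infty)$; then the series together with its first $k$ tangential derivatives converges uniformly, so $g$ is leafwise $C^k$ with continuous tangential derivatives, while $g>0$ on $A_i$ and $\{g\neq0\}\subset\bigcup_\ell\widehat B_\ell\times\widehat Q_\ell\subset\Omega_i$. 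Pulling $g$ back by $\varphi_i$ and extending it by $0$ off $V_i$ produces $g_i$: the extension stays in $C^k(\FF)$ because $X=V_i\cup(X\setminus\overline{V_i'})$ and $\{g_i\neq0\}\subset V_i'$, so $g_i$ vanishes near $\partial V_i$; note also $\supp g_i\subset\overline{V_i'}\subset V_i\subset W_{\alpha(i)}$.

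Finally I assemble. Since $\{V_i\}$ is locally finite so is $\{\supp g_i\}$, hence $G=\sum_i g_i$ is locally a finite sum and lies in $C^k(\FF)$; since $\{V_i''\}$ covers $X$ and $g_i>0$ on $\overline{V_i''}$, we have $G>0$ everywhere, and then $1/G\in C^k(\FF)$ (its tangential derivatives are the universal polynomials in those of $G$ divided by powers of $G$, all continuous). Thus $\rho_i=g_i/G\in C^k(\FF)$ is a locally finite partition of unity with $\supp\rho_i\subset W_{\alpha(i)}$, and grouping the indices $i$ according to $\alpha(i)$ yields a $C^k$ partition of unity subordinate to $\{W_\alpha\}$. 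The only delicate point is the bump construction in the model $H\times P_i$ — precisely the failure of compactness and of local finiteness that the summable weights $2^{-\ell}M_\ell^{-1}$ repair; once past it, the argument is the standard one transported to the leafwise-$C^k$ category.
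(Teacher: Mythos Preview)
Your argument is correct and follows essentially the same route as the paper, which gives no detailed proof but only the one-sentence justification preceding the statement: separable Hilbert spaces admit $C^\infty$ partitions of unity, the transverse model is paracompact Hausdorff, and the atlas is locally finite. Your construction is a careful fleshing-out of exactly these ingredients, with the added (and appropriate) care of the weights $2^{-\ell}M_\ell^{-1}$ to force uniform convergence of the bump series in the absence of local compactness in the Hilbert model.
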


As a consequence of the Proposition~\ref{p:existence of a partition of 1} and following the same argument of \cite{Candel-Conlon, Alvarez-Candel}, we have

\begin{prop} \label{p:lamination in Hilbert space}
If $(X,\FF)$ is a lamination of class $C^k$, then there is a smooth $C^k$ embedding $\varphi$ of $X$ in the separable real Hilbert space $\mathbb{E}$. Moreover, a given metric tensor along the leaves can be extended to a metric tensor on $\mathbb{E}$.
\end{prop}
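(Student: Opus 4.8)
The plan is to follow the classical Whitney-type embedding argument adapted to the foliated setting, as in the references, using the $C^k$ partition of unity from Proposition~\ref{p:existence of a partition of 1} to glue local chart maps into a single map to a separable Hilbert space. First I would fix a countable locally finite foliated atlas $\{(U_i,\varphi_i)\}_{i\in\N}$ with $\varphi_i\colon U_i\to H\times P_i$, which exists since $X$ is Polish; shrinking if necessary, choose a further locally finite refinement $\{V_i\}$ with $\overline{V_i}\subset U_i$ and a $C^k$ partition of unity $\{\lambda_i\}$ subordinate to $\{U_i\}$ with $\lambda_i\equiv 1$ on a neighborhood of $\overline{V_i}$. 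For each $i$ the map $\lambda_i\cdot\varphi_i$ (extended by $0$ outside $U_i$) is a globally defined $C^k$ map from $X$ to $H\times\R$, where the $\R$ factor records $\lambda_i$ itself so that one can recover $\varphi_i$ on the set where $\lambda_i\neq 0$; here I replace each transverse model $P_i$ by a copy of $H$ or of $\R^{\N}$, using that a Polish space embeds in the Hilbert cube and hence in a separable Hilbert space, so that all the target pieces live in copies of one fixed separable Hilbert space.

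Next I would assemble $\varphi = (\lambda_i\varphi_i,\lambda_i)_{i\in\N}\colon X\to \mathbb{E}$ where $\mathbb{E}=\bigoplus_{i\in\N}(H\times\R)$ is the Hilbert-space direct sum (equivalently $\ell^2$ of these pieces), checking that $\varphi$ is well defined and $C^k$: local finiteness means that near any point only finitely many coordinates are nonzero, so continuity and differentiability are local statements, and square-summability is automatic. Injectivity follows because if $\varphi(x)=\varphi(y)$ then picking $i$ with $\lambda_i(x)>0$ forces $\lambda_i(y)=\lambda_i(x)$ and $\varphi_i(x)=\varphi_i(y)$, hence $x=y$ since $\varphi_i$ is a homeomorphism onto its image. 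That $\varphi$ is a homeomorphism onto its image, and that it is leafwise an immersion with the leafwise derivative injective with closed split image, is checked locally in the same way: on each $V_i$ the component $\lambda_i\varphi_i$ already recovers the chart. Finally, to extend a given leafwise metric tensor to $\mathbb{E}$, I would use the same partition of unity to write the leafwise metric as $\sum_i\lambda_i g_i$ where $g_i$ is transported from the standard metric on the $H$-factor of chart $i$, and then observe that the embedding $\varphi$ is leafwise isometric for a metric on a tubular-type neighborhood, or more simply extend $g=\sum_i\lambda_i\,\varphi_i^*(\text{flat metric})$ off the image by any $C^k$ extension, using that $\mathbb{E}$ admits $C^k$ bump functions.

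The main obstacle is purely in the Hilbert (infinite-dimensional) bookkeeping rather than in the geometry: one must be careful that the countable direct sum $\mathbb{E}=\bigoplus_i(H\times\R)$ is again a \emph{separable} Hilbert space (it is, being a countable $\ell^2$-sum of separable spaces), and that the map $\varphi$ really lands in it, i.e. that $\sum_i(\|\lambda_i(x)\varphi_i(x)\|^2+\lambda_i(x)^2)<\infty$ for every $x$ — which holds because the sum is finite by local finiteness — and that $\varphi$ is a \emph{topological} embedding, for which properness or at least the homeomorphism-onto-image property must be verified using local finiteness of $\{U_i\}$ together with $\sum_i\lambda_i\equiv 1$. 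Since each plaque of every chart meets at most one plaque of any other chart, the leafwise structure is respected coordinate-by-coordinate, so no extra transverse interaction appears. All of this is routine given Proposition~\ref{p:existence of a partition of 1} and the fact that separable Hilbert spaces admit $C^\infty$ partitions of unity, which is why the statement follows "by the same argument" as in the cited references.
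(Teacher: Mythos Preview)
Your proposal is correct and is precisely the standard Whitney-type argument via a $C^k$ partition of unity that the paper invokes by reference: the paper gives no independent proof of this proposition, only the remark that it follows from Proposition~\ref{p:existence of a partition of 1} ``following the same argument of \cite{Candel-Conlon, Alvarez-Candel}.'' What you wrote is exactly that argument spelled out, so there is nothing to compare.
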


Let $(X,\FF)$ be a usual lamination (of finite dimension), of class $C^k$ with $k\ge1$, embedded in the Hilbert space $\mathbb{E}$. The restriction of the embedding to each leaf is not an embedding, but only an injective immersion. The smoothness of $\FF$ being at least $C^1$ implies that the map which assigns to a point $x \in X$ the subspace $T_x\FF$ of $\mathbb{E}$ is continuous (as a map of $\FF$ into the space of $n$-dimensional subspaces of $\mathbb{E}$). It follows that if $F$ is a subspace complementary to one $T_xX$ in $\mathbb{E}$, then it is also complementary to $T_yX$ for $y$ close to $x$. The key point is that each tangent space $T_xX$ is a finite dimensional subspace of $\mathbb{E}$. Hence it is closed and has an orthogonal complement.

\begin{thm}[\cite{Alvarez-Candel}]\label{t:Hilbert Orthogonal}
Let $X$ be a lamination of finite dimension embedded in $\mathbb{E}$ as above, of class $C^2$. Let $i:L\to\mathbb{E}$ denote the inclusion of a leaf $L$ in $X\subset\mathbb{E}$. Then there is a vector bundle $\pi : N \to L$ and a neighborhood $W$ of the zero section of $N$ such that the following properties hold:
\begin{itemize}

\item[(i)] The map $i:L\to\mathbb{E}$ extends to a local diffeomorphism $\varphi:W\to\mathbb{E}$;

\item[(ii)] there is a laminated subspace $Y \subset W$, of the same dimension as $X$, having $L$ as a
leaf and transverse to the fibers of $N$; and

\item[(iii)] as foliated spaces, $Y = \varphi^{-1}(X \cap \varphi(W ))$, and the restriction of $\pi$ to each leaf of $Y$ is a local diffeomorphism into $L$.

\end{itemize}
\end{thm}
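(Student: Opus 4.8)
The plan is to carry out the classical tubular neighbourhood construction, but with the ambient Euclidean space replaced by $\mathbb{E}$ and with every estimate made uniform along $L$, so that it survives the passage from a single leaf to the transverse directions. First I would build the bundle: for $x\in L$ put $N_x=(T_xL)^{\perp}\subset\mathbb{E}$, the orthogonal complement taken with respect to the inner product of $\mathbb{E}$. Because $X$ is $C^2$, the map $x\mapsto T_xX$ depends continuously on $x$ and $C^1$-smoothly along leaves, and each $T_xL$ is a closed finite dimensional subspace, so the orthogonal projection $P_x$ onto $N_x$ depends $C^1$ on $x$ along $L$; comparing $N_x$ with a fixed fibre $N_{x_0}$ over a small chart of $L$ by means of $P_x$ gives local trivialisations, so $\pi:N\to L$ is a $C^1$ Hilbert vector bundle with $T_xL\oplus N_x=\mathbb{E}$ for every $x$.

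Next I would define $\varphi:N\to\mathbb{E}$ by $\varphi(v)=\pi(v)+v$, viewing $v$ both as an element of the fibre over $\pi(v)$ and as a vector of $\mathbb{E}$; on the zero section $\varphi$ restricts to $i$, and its differential there is the isomorphism $T_xL\oplus N_x\to\mathbb{E}$, $(\xi,w)\mapsto\xi+w$. By the inverse function theorem in Hilbert space, each point of the zero section has a neighbourhood on which $\varphi$ is a diffeomorphism onto an open subset of $\mathbb{E}$; shrinking the fibre radius to a continuous positive function of $x\in L$ and taking the union of the resulting neighbourhoods (legitimate since $L$ is second countable and all the data vary continuously) produces an open neighbourhood $W$ of the zero section on which $\varphi$ is a $C^1$ local diffeomorphism. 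This proves (i).

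For (ii) and (iii) I would pull the lamination back: set $Y=\varphi^{-1}\bigl(X\cap\varphi(W)\bigr)$. The crucial point is that, after shrinking $W$ once more, every leaf of $X$ meeting $\varphi(W)$ is transverse in $\mathbb{E}$ to the image $\varphi(W\cap N_x)$ of each fibre: near the zero section the latter has tangent space $C^1$-close to $N_x$, while a leaf of $X$ through a point near some $x\in L$ has tangent space close to $T_xX=T_xL$, and $T_xL\oplus N_x=\mathbb{E}$ is a complementary, hence transverse, pair; by the openness of complementarity recalled just before the statement, transversality persists on a neighbourhood of the zero section. Since $\varphi$ is a local diffeomorphism, pulling a foliated chart of $X$ back through $\varphi$ and intersecting with $W$ yields a foliated chart of $Y$, so $Y$ is a laminated subspace of the same dimension as $X$; the zero section maps into $X$ under $\varphi$ (it maps onto $L$), hence lies in $Y$ and is a leaf of it, a copy of $L$. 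Finally, transversality of the leaves of $Y$ to the fibres of $\pi$ makes $d\pi$ injective on each leaf tangent space, and, the dimension of a leaf of $Y$ being that of $L$, it is an isomorphism onto the tangent space of $L$; thus $\pi$ restricted to each leaf of $Y$ is a local diffeomorphism into $L$, which is (iii).

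The step I expect to be the main obstacle is the uniform choice of $W$: since $L$ is in general non-compact and only injectively immersed (not properly embedded) in $\mathbb{E}$, one must pick the fibre radii so that $\varphi$ remains a local diffeomorphism \emph{and} so that the transversality needed in (ii) and (iii) holds simultaneously over all of $L$, and then verify that the set $Y$ so obtained is exactly $\varphi^{-1}(X\cap\varphi(W))$ rather than merely a neighbourhood of the zero section inside it. Everything else is a foliated rephrasing of the standard tubular neighbourhood argument, combined with the inverse function theorem in Banach spaces and the finite dimensionality of the leafwise tangent spaces.
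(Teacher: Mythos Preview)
The paper does not prove this theorem; it is quoted verbatim from \cite{Alvarez-Candel} and carries no proof environment in the text. There is therefore nothing in the paper to compare your proposal against.

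For what it is worth, your sketch follows the standard tubular neighbourhood argument (normal bundle via orthogonal complements, exponential-type map $\varphi(v)=\pi(v)+v$, inverse function theorem, then pull back the lamination and use openness of complementarity for transversality), which is indeed the approach taken in the cited reference. Your own diagnosis of the delicate point is accurate: the only genuine work beyond the classical case is the uniform choice of fibre radius over a non-properly-embedded leaf, and the verification that $Y$ coincides with $\varphi^{-1}(X\cap\varphi(W))$ rather than merely sitting inside it. One small correction: you want $\varphi$ to be a local diffeomorphism on $W$, not necessarily injective globally, so the shrinking of fibre radii need only guarantee local invertibility and transversality, not global injectivity of $\varphi$; this is consistent with the statement, which only asserts a \emph{local} diffeomorphism.
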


\begin{defn}[Tangential isotopy]
Let $M$ be a $C^r$ Hilbert manifold, $r\geq 1$. A $C^q$ {\em isotopy\/} on $M$, $1\leq q\leq r$, is a
$C^q$ differentiable map $\phi:M\times\R\to M$ such that $\phi_t=\phi(-,t):M\to M$
is a diffeomorphism for all $t\in [0,1]$ and $\phi_0=\id_M$. If
$(X,\FF)$ is a $C^{r}$ Hilbert lamination, a $C^q$ {\em
tangential isotopy\/} on $(X,\FF)$ is a leafwise $C^{q}$ map
$\phi:X\times \R\to X$ such that the functions $\phi_t=\phi(-,t):X\to X$ are
homeomorphisms for all $t$, and $\phi$ restricts to usual isotopies on the leaves of $\FF$.
\end{defn}

\begin{rem}\label{r:openinvtop}
Let $\phi$ be a tangential isotopy on $X$ and let
$U\subset X$ be an open set. Then
$\Cat(U,\FF)=\Cat(\phi_t(U),\FF)$ for all $t\in\R$.
\end{rem}

\begin{exmp}[Construction of a tangential isotopy \cite{Palais}]\label{e:vectorfield}
A tangential isotopy can be constructed on a Hilbert manifold by using a $C^1$ tangent vector field $V$. There exists a flow $\phi_t(p)$ such that $\phi_0(p)=p$, $\phi_{t+s}(p)=\phi_t(\phi_s(p))$ and $d\phi_t(p)/dt=V(\phi_t(p))$. From the way of obtaining $\phi$ \cite{Palais,Crandall-Pazy}, it follows that the same kind of construction for a $C^1$ tangent vector field on a measurable Hilbert lamination $(X,\FF)$ induces a tangential isotopy on $(X,\FF)$.

Now we obtain a tangential isotopy from the gradient flow of a differentiable map. It will be modified by a control function $\alpha$ in order to have some control on the deformations induced by the corresponding isotopy.
Let $\nabla f$ be the gradient tangent vector field of $f$; i.e., the
unique tangent vector field satisfying $df(v)=\langle v,\nabla
f\rangle$ for all $v\in T\FF$. Take the $C^{1}$ vector field $V=-\alpha(|\nabla f|)\,\nabla f$, where $\alpha:[0,\infty)\to\R^+$ is $C^\infty$, $\alpha(t)\equiv 1$ for $0\leq t\leq 1$, $t^2\alpha(t)$ is monotone non-decreasing and $t^2\alpha(t)=2$ for $t\geq 2$. The flow $\phi_t(p)$ of $V$ is defined for $-\infty<t<\infty$ \cite{Schwartz}, and it is called the {\em modified gradient flow\/}.
\end{exmp}

Let us define a partial order relation ``$\ll$'' for the critical
points of $f$. First, we say that $x<y$ if there exists a regular
point $p$ such that $x\in\alpha(p)$ and $y\in\omega(p)$, where
$\alpha(p)$ and $\omega(p)$ are the $\alpha$- and $\omega$-limits of
$p$. Thendefine $x\ll y$ if there exists a finite sequence of critical
points, $x_1,\dots,x_n$, such that $x < x_1 < \dots< x_n < y$.

Let $\gamma(p)$ respectively denote the flow orbit of each point $p$; that is, the trace of the curve $\phi_t(p)$ with $t\in\R$.

\begin{lemma}\label{l:toplowaproxim}
Let $T\subset X$ be a closed set into an open transversal, and let $V$ be an open set such that $T\subset V$. Then there exists a \FF-categorical open set $U$ containing $T$ such that
$T\subset U\subset V$.
\end{lemma}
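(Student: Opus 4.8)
The plan is to produce $U$ as a transversally thin foliated tube around $T$ and to contract it leafwise onto $T$. First I would fix the local model: for each $x\in T$ choose a foliated chart $(W_x,\psi_x)$, $\psi_x\colon W_x\to B_x\times S_x$ with $B_x$ an open ball of the model Hilbert space centred at $0$, so small that $W_x\subset V$, that $\psi_x(T\cap W_x)=\{0\}\times S_x$ (possible since $T$ lies in an open transversal, hence is locally a slice), and that the projection $W_x\to S_x$ restricts to a \emph{homeomorphism} $T\cap W_x\to S_x$. The last point uses that $T$ is closed in $X$: this forces $T$ to meet each plaque in a set discrete in the plaque, so after shrinking the chart every plaque of $W_x$ meets $T$ in exactly one point, its centre. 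Since $X$ is Polish and $T$ is closed, $T$ is Lindel\"of, so countably many such charts $W_i:=W_{x_i}$ cover $T$, and by paracompactness I may take $\{W_i\}$ locally finite. Then I shrink the ball directions, replacing $W_i$ by $U_i=\psi_i^{-1}(B_i'\times S_i)$ with $B_i'\subset B_i$ a smaller ball around $0$, keeping $T\subset\bigcup_iU_i=:U\subset V$ and choosing the $B_i'$ small enough for the compatibility used below; by local finiteness this is a local constraint.

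Next I would show every leaf of $\FF_U$ meets $T$ in exactly one point. Each plaque of $U_i$ has a well-defined centre in $T\cap U_i$, giving a continuous $\pi_i\colon U_i\to T\cap U_i$ with $\pi_i|_{T\cap U_i}=\id$ and $\pi_i(z)$ on the $\FF$-leaf of $z$. If plaques $Q\subset U_i$, $Q'\subset U_j$ meet, then by the standing assumption that a plaque of one chart meets at most one plaque of another, $Q$ and $Q'$ lie in a common plaque chain of $\FF$, and a short computation shows $\pi_i=\pi_j$ on $Q\cap Q'$ provided the centre of $Q'$ lies in $W_i$; since that centre is close to $Q\cap Q'$ when the tube is thin and $W_i$ is open, thinning the $U_i$ makes $\pi_i=\pi_j$ on every overlap $U_i\cap U_j$. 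The $\pi_i$ then glue to a continuous $\pi\colon U\to T$ that is constant on each plaque, hence along each plaque chain, hence on each leaf of $\FF_U$; and as $\pi|_T=\id$, a leaf of $\FF_U$ meeting $T$ at $t$ and $t'$ would give $t=\pi(t)=\pi(t')=t'$. So each $\FF_U$-leaf meets $T$ exactly once, at $\pi(\cdot)$.

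For the contraction I would build, for each $i$, the leafwise $C^1$ vector field $R_i$ on $U_i$ equal in the chart to the radial field $-v\,\partial_v$: it is tangent to $\FF$, vanishes exactly on $T\cap U_i$, and its leafwise linearisation along $T$ is $-\id$. Using a $C^k$ partition of unity $\{\lambda_i\}$ subordinate to $\{U_i\}$ (Proposition~\ref{p:existence of a partition of 1}), set $R=\sum_i\lambda_iR_i$: a leafwise $C^1$ field on $U$, tangent to $\FF$, vanishing on $T$, with leafwise linearisation $-\id$ along $T$. For $U$ thin enough the leafwise distance to $T$ decreases strictly along forward orbits of $R$, so (as in Example~\ref{e:vectorfield}) the forward flow $\Phi_\tau$ is defined for all $\tau\ge0$, stays in $U$, and $\Phi_\tau\to\pi$ uniformly on compacta with exponential rate. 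Then $H(z,t)=\Phi_{t/(1-t)}(z)$ for $t\in[0,1)$ and $H(z,1)=\pi(z)$ is a tangential homotopy with $H(-,0)=\id_U$ and $H(-,1)=\pi$ constant on each leaf of $\FF_U$; hence $U$ is $\FF$-categorical, and $T\subset U\subset V$.

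The main obstacle is the step ``each leaf of $\FF_U$ meets $T$ once'': if the tube is not thin enough, two points of $T$ far apart along a common $\FF$-leaf but close in $X$ can be joined by a short leafwise arc inside $U$, so some $\FF_U$-leaf meets $T$ twice; ruling this out is exactly what forces the careful, local-finiteness-driven choice of the $B_i'$, and is where closedness of $T$ in $X$ (hence local discreteness of $T$ in the plaques) is essential. The remaining verifications—continuity of $\pi$, forward completeness of the flow and its exponential convergence to $\pi$, and continuity of $H$ at $t=1$—are routine once $U$ has been chosen thin.
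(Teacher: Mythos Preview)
Your approach---local radial contraction in foliated charts, then gluing by a partition of unity---is exactly the paper's, and you supply far more detail than its three-sentence sketch. There is, however, one slip that propagates through the write-up.

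You assert that, after shrinking, every plaque of $W_x$ meets $T$ in exactly one point, and you use this to define $\pi_i:U_i\to T\cap U_i$. But $T$ is only a closed subset \emph{inside} an open transversal $T'$; nothing forces it to fill the transverse direction. For instance, take $X=\R^2$ with the horizontal foliation, $T'=\{0\}\times\R$, and $T=\{0\}\times\Z$: in any foliated chart around $(0,0)$ the plaques at non-integer height miss $T$ entirely. So $\pi_i$ is undefined on those plaques, and your global $\pi:U\to T$ and the contraction $H(\cdot,1)=\pi$ are undefined on part of $U$. (Closedness of $T$ gives you nothing here; a subset of a transversal is automatically plaquewise discrete.)

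The repair is immediate and is what the paper has in mind: contract to the open transversal $T'$ rather than to $T$. For $W_x$ small enough, the open transversal does meet every plaque of $W_x$ exactly once, so the centre map $\pi_i:U_i\to T'\cap U_i$ is well-defined; your radial fields $R_i$ vanish precisely on $T'\cap U_i$; and the glued field $R=\sum_i\lambda_iR_i$ vanishes on $T'\cap U$. With $T'$ in place of $T$ as the target, your thinning argument that each leaf of $\FF_U$ meets $T'$ exactly once, and the subsequent flow argument, go through verbatim. The resulting $U$ still satisfies $T\subset U\subset V$ (you chose the charts around points of $T$ and inside $V$), and the lemma only asks that $U$ be $\FF$-categorical, not that the contraction land in $T$ itself.
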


\begin{proof}
An open transversal is homeomorphic to an open set of a associated transversal of a foliated chart via the projection map. Hence it is easy to prove the lemma locally. We can extend the proof by using the locally finiteness of the atlas and the existence of partitions of unity in order to give a global tangential contraction.
\end{proof}

We restrict our study to the case where the leafwise critical points are isolated on the leaves. Therefore the set of leafwise critical points is a transverse object.

\begin{lemma}\label{pr:measurableflow}
Suppose that $\Crit_\FF(f)$ is a transverse set. The modified gradient flow $\phi$ \upn{(}see Example~\ref{e:vectorfield}\upn{)}
satisfies the following properties:
\begin{itemize}
  \item[(i)] The flow runs towards lower level sets of $f$, i.e., $f(p)\geq f(\phi_t(p))$ for $t>0$.

  \item[(ii)]The invariant points of the flow are just the critical points of $f$.

  \item[(iii)] A point is critical if and only if $f(\phi_t(p))=f(p)$ for some $t\neq 0$.

  \item[(iv)] The points in the $\alpha$- and $\omega$-limits are critical points if they are non empty.

\end{itemize}
\end{lemma}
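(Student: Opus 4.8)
The plan is to reduce all four assertions to the elementary differential identity satisfied along the modified gradient flow and then read them off one at a time; only~(iv) needs genuine care, since an $\omega$-limit point of an orbit need not lie on the leaf carrying that orbit. First I would record the computation: the orbit $\phi_t(p)$ stays in the leaf $L$ through $p$, on which $f$ is $C^{2}$, and along it
\[
  \frac{d}{dt}\,f(\phi_t(p)) = df_{\phi_t(p)}\bigl(V(\phi_t(p))\bigr) = \bigl\langle\nabla f, V\bigr\rangle_{\phi_t(p)} = -\,\alpha\bigl(|\nabla f(\phi_t(p))|\bigr)\,|\nabla f(\phi_t(p))|^{2}\le 0 ,
\]
the inequality being an equality at a given $t$ exactly when $\nabla f(\phi_t(p))=0$, because the control function $\alpha$ (see Example~\ref{e:vectorfield}) takes only positive values on $[0,\infty)$. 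Integrating over $[0,t]$ yields~(i).

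For~(ii): if $p$ is a critical point then $\nabla f(p)=0$, so $V(p)=0$ and $\phi_t(p)\equiv p$; conversely, if $\phi_t(p)\equiv p$, differentiating at $t=0$ gives $V(p)=0$, and positivity of $\alpha$ forces $\nabla f(p)=0$. For~(iii), one implication is contained in~(ii). For the other, assume $f(\phi_{t_0}(p))=f(p)$ for some $t_0>0$; by~(i) the function $t\mapsto f(\phi_t(p))$ is non-increasing on $[0,t_0]$ and takes equal values at the two endpoints, hence is constant there, so its derivative vanishes identically on $[0,t_0]$ and therefore $\nabla f(\phi_t(p))=0$ for all such $t$, in particular $p$ is critical. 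The case $t_0<0$ reduces to this one applied to $q=\phi_{t_0}(p)$: that point is then critical, hence fixed, so $p=\phi_{-t_0}(q)=q$ is critical too.

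For~(iv), let $q\in\omega(p)$, say $\phi_{t_n}(p)\to q$ in $X$ with $t_n\to+\infty$. Since $t\mapsto f(\phi_t(p))$ is non-increasing by~(i) and $f$ is continuous with $f(\phi_{t_n}(p))\to f(q)\in\R$, the monotone limit $c:=\lim_{t\to+\infty}f(\phi_t(p))$ exists and equals the finite number $f(q)$. Fix $s\in\R$. Continuity of the flow gives $\phi_{s+t_n}(p)=\phi_s(\phi_{t_n}(p))\to\phi_s(q)$, hence $f(\phi_{s+t_n}(p))\to f(\phi_s(q))$ by continuity of $f$; but $s+t_n\to+\infty$, so this quantity also tends to $c$. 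Thus $f(\phi_s(q))=c=f(q)$ for every $s$, and~(iii) applied at $q$ (with any $s\neq0$) shows that $q$ is a critical point of $f$. The assertion for the $\alpha$-limit is obtained word for word with $t_n\to-\infty$ and the monotone limit of $f(\phi_t(p))$ as $t\to-\infty$.

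The one subtlety worth flagging is the step just used in~(iv): the orbit $\gamma(p)$ never leaves the leaf of $p$, but the limit point $q$ may sit on a different leaf, so ``$q$ is critical'' cannot be obtained from leafwise dynamics alone and must instead be extracted from the ambient continuity of $f$ and of $\phi$. Everything else is the classical argument of Schwartz transcribed leafwise. I would also point out that the hypothesis that $\Crit_\FF(f)$ be transverse plays no active role in these four properties; it is the standing assumption under which the critical set, and later the order relation $\ll$ on it, become transverse objects.
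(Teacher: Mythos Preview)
Your argument is correct and follows the same line as the paper's, only spelled out in full where the paper simply cites Schwartz for (i)--(iii) and then invokes those three items for (iv). One point of comparison is worth recording: the paper's proof of (iv) actually goes a step beyond the statement, using the transversality hypothesis to conclude that each nonempty $\omega$-limit is a \emph{single} critical point (it observes that $\omega(p)$ is connected and consists of critical points, hence would contain non-isolated critical points if infinite, contradicting the assumption that $\Crit_\FF(f)$ meets each leaf discretely). You correctly note that the transversality hypothesis is idle for (i)--(iv) as literally stated, but you should be aware that this sharper conclusion is what the paper relies on downstream (e.g.\ in the definition of the critical sets and in the main theorem). Your explicit handling of the ambient-versus-leaf issue in (iv) is a genuine clarification over the paper's terse ``these properties can be proved in each leaf''.
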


\begin{proof}
These properties can be proved in each leaf, considered as a $C^2$
Hilbert manifold, where~(i),~(ii) and~(iii) follow from the work of J.~Schwartz \cite{Schwartz}.

Under these conditions, the $\alpha$- and $\omega$-limits are
connected sets that consist of critical points if they are non-empty (by using~(i),~(ii) and~(iii)). If $\omega(p)$ is infinite, then all of its points
are non-isolated, contradicting the assumption.
\end{proof}

\begin{defn}[Critical sets]\label{d:partcrit}
Suppose that $f:\FF\to\R$ is a $C^2$ function such that for each leafwise critical point $p$, either $p$ is a relative minimum, or there exists another critical point $x$ such that $p<x$. Define $M_0,M_1,\dots$ inductively by
\begin{align*}
M_0&=\{\,x\in \Crit_\FF(f)\mid\not\exists\ y\ \text{such that}\ x\ll y\,\}\;,\\
M_i&=\{\,x\in\Crit_\FF(f)\mid\forall y\,\ x\ll y\ \Rightarrow\ y\in M_0\cup\dots\cup M_{i-1}\,\}\;.
\end{align*}
Clearly, $M_0$ contains all relative minima on the leaves. We also
set $C_0(f)=M_0$ and $C_i(f)=M_i\setminus(M_0\cup\dots\cup
M_{i-1})$. Observe that, if $x\in\omega(p)\cap C_i(f)$ for
some $i$, then $\omega(p)\subset C_i(f)$. There is an analogous
property for the $\alpha$-limit. The notation $C_i$ will be used
if there is no confusion and they will be called the {\em critical sets\/} of $f$. Let $p\ll p^*$. Then $i_{p^*}< i_p$,
where $i_{p}$ and $i_{p^*}$ are the indexes such that $p\in
C_{i_p}$ and $p^*\in C_{i_{p^*}}$. Observe that the critical sets are $\sg$-compact.
\end{defn}

The definition of a Palais-Smale condition is needed for this section. For Hilbert laminations, it could be adapted by taking functions that satisfy the Palais-Smale condition on all (or almost all) leaves. But this is very restrictive because it would mean that the set of relative minima meets each leaf in a relatively compact set (which is non-empty when $f$ is bounded from below), and therefore there would exist a complete transversal meeting each leaf at one point. This would be a very restrictive condition on the foliation. Thus, instead, we use the following weaker condition.

\begin{defn}\label{d:PStopological}
An {\em $\omega$-Palais-Smale\/} (or simply, {\em $\omega$-PS\/}) function is a function $f\in
C^{2}(\FF)$ such that all of its critical sets are closed (in the ambient topology), for any $p\in\Crit_\FF(f)$, the set $\{\,x\in\Crit_\FF(f)\mid p\ll x\,\}$ is compact, and this set is empty if and only if $p$ is a relative minimum and any flow line of $\phi$ has a non-empty $\omega$-limit. An {\em $\alpha$-Palais-Smale\/} (or simply, {\em $\alpha$-PS\/}) function is defined analogously by taking the set $\{\,x\in\Crit_\FF(f)\mid x\ll p\,\}$.
\end{defn}

Of course, $f$ is $\omega$-PS if and only if $-f$ is $\alpha$-PS. The set of relative minima of a $\omega$-PS function bounded from below is always non-empty in any leaf. In the case of a single manifold and $f$ with isolated critical points, every bounded from below Palais-Smale function is an $\omega$-PS function.

\begin{rem}\label{r:goodcriticalsets}
In a $\omega$-PS function, each critical set is a closed set in an open transversal. If there exists a branching point in a critical set $C_i$ then we can use the mountain pass theorem \cite{Jabri} to obtain that some $C_j$ is not closed for $j\neq i$.
\end{rem}

\section{Main theorem}

\begin{thm}\label{t:TanCrit}
Let $(X,\FF)$ be a Hilbert lamination endowed with a Riemannian metric on the leaves varying continuously on the ambient space, and let $f$ be an $\omega$-PS function. Suppose that $\Crit_\FF(f)$ meets each leaf in a discrete set. Then
$\Cat(\FF)\leq \#\{\text{critical sets of}\ f\}\;.$
\end{thm}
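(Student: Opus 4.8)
The plan is to mimic the classical Ljusternik--Schnirelmann--Schwartz argument, but organized around the critical sets $C_i(f)$ rather than level sets, using the modified gradient flow of Example~\ref{e:vectorfield} to push everything into arbitrarily small neighborhoods of the critical sets. I would fix the enumeration $C_0(f), C_1(f),\dots$ and show that for each $i$ there is an \FF-categorical open set $U_i$ which, together with $U_0\cup\dots\cup U_{i-1}$ and the flow, covers the basin of all critical sets of index $\le i$; then the union of the $U_i$ covers $X$ and gives $\Cat(\FF)\le\#\{C_i(f)\}$.

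First I would set up the topology of the critical sets. By Remark~\ref{r:goodcriticalsets} and the $\omega$-PS hypothesis, each $C_i(f)$ is a closed subset of an open transversal, hence (since $X$ is Polish and $\Crit_\FF(f)$ is a transverse, leafwise discrete set) a closed set sitting inside a genuine transversal. By Lemma~\ref{l:toplowaproxim}, for any open neighborhood $V\supset C_i(f)$ there is an \FF-categorical open set $U$ with $C_i(f)\subset U\subset V$; this produces the candidate categorical sets $U_i$, and by choosing $V$ small we can keep the $U_i$ pairwise disjoint within a fixed $i$-block and, more importantly, disjoint from the higher critical sets when convenient. Next I would use Lemma~\ref{pr:measurableflow}: the modified gradient flow $\phi_t$ runs towards lower level sets, its rest points are exactly the critical points, and every $\alpha$- and $\omega$-limit consists of critical points lying in a single $C_j$. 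Combined with the $\omega$-PS assumption that every flow line has a nonempty $\omega$-limit, this means: for every $p\in X$, $\omega(p)\subset C_{j}$ for some $j$, so $\phi_t(p)$ eventually enters $U_{j}$; hence $X=\bigcup_i\bigcup_{t\ge 0}\phi_{-t}(U_i)$, and since $\phi_t$ is a homeomorphism preserving leaves, each $\phi_{-t}(U_i)$ is again \FF-categorical with the same category count by Remark~\ref{r:openinvtop}.

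The remaining work is a deformation/compactness step to replace the non-locally-finite union $\bigcup_{t\ge 0}\phi_{-t}(U_i)$ by a single categorical open set. Here I would argue inductively on $i$: assuming $U_0,\dots,U_{i-1}$ are categorical open sets covering a flow-saturated neighborhood $W_{i-1}$ of $C_0(f)\cup\dots\cup C_{i-1}(f)$, consider the set of points whose $\omega$-limit lies in $C_i(f)$. Using that $\{x\in\Crit_\FF(f)\mid p\ll x\}$ is compact (the key $\omega$-PS input) and that $p\ll p^*$ forces $i_{p^*}<i_p$, one shows that a compact ``fundamental domain'' for the flow on this set lies, after finite time, inside $U_i\cup W_{i-1}$; then a standard first-deformation-type lemma (a leafwise adaptation of Schwartz's argument, valid here since each leaf is a $C^2$ Riemannian Hilbert manifold and the flow is globally defined) lets us deform the full basin into $U_i\cup W_{i-1}$ through leafwise homotopies, keeping $W_{i-1}$ essentially fixed. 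The categorical contractions on $U_i$ and on the $U_j$'s then combine, via Proposition~\ref{p:subaddtivity of the relative category}, to bound $\Cat(\FF)$ by the number of blocks, i.e.\ the number of critical sets.

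The main obstacle I expect is exactly this last deformation step in the laminated, infinite-dimensional setting: the flow $\phi_{-t}$ has no uniform-in-$t$ control, $U_i$ need not be flow-invariant, and the leaves are noncompact, so one cannot simply quote the compact-manifold first deformation lemma. The fix is to exploit the $\omega$-PS compactness of $\{x\mid p\ll x\}$ together with $\sigma$-compactness of the critical sets to produce, for each flow line, a finite entry time into $U_i\cup W_{i-1}$ that varies in a controlled (leafwise continuous, transversally measurable) way, and to patch these via the $C^k$ partitions of unity from Proposition~\ref{p:existence of a partition of 1}; Lemma~\ref{l:vogt0} guarantees that the resulting categorical sets still have the transversal structure needed so that the count is genuinely the number of critical sets and not larger.
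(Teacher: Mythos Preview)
Your inductive skeleton is the same as the paper's: choose \FF-categorical neighborhoods $U_i\supset C_i$ via Lemma~\ref{l:toplowaproxim}, handle $C_0$ by taking the full backward flow $U'_0=\bigcup_n\phi_{-n}(U_0)$ (which is categorical because $C_0$ consists of leafwise minima), and then proceed inductively on $X_i=X\setminus(U'_0\cup F'_1\cup\dots\cup F'_{i-1})$, where $C_i$ plays the role of the minima of $f|_{X_i}$. The flow-saturated set $F'_i=\bigcup_n\phi_{-n}(U_i\cap X_i)$ is closed in $X$, and the whole proof reduces to producing an \FF-categorical \emph{open} neighborhood of $F'_i$.

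The gap is precisely at the step you flag, and your proposed fix does not close it. The ``finite entry time into $U_i\cup W_{i-1}$'' is not leafwise continuous: a point $x$ arbitrarily close to a flow line $\gamma\subset F'_i\setminus C_i$ may have $\omega(x)\in C_j$ with $j>i$, so its orbit never enters $U_i$ and need not enter $W_{i-1}$ either; the entry time blows up along such sequences. Likewise, there is no ``compact fundamental domain'' for the flow on the basin of $C_i$ in this noncompact, infinite-dimensional setting; the compactness in the $\omega$-PS hypothesis concerns only the set of critical points $\{x\mid p\ll x\}$, not the regular basins, and the $\sigma$-compactness of the $C_i$ does not propagate to their stable sets.

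The paper's resolution is a different mechanism that you do not mention. One regards $(X\setminus\Crit_\FF(f),\phi)$ as a $1$-dimensional lamination, embeds it in a Hilbert space $\mathbb{E}$ via Proposition~\ref{p:lamination in Hilbert space}, and applies Theorem~\ref{t:Hilbert Orthogonal} to obtain, for each flow line $\gamma\subset F'_i\setminus C_i$, a tubular neighborhood $W(\gamma)$ with a projection $\pi:W(\gamma)\to\gamma$ that is a leafwise local diffeomorphism. Covering $F'_i\setminus C_i$ by countably many such tubes $W(\gamma_n)$ with a subordinate partition of unity $\{\lambda_n\}$, one defines the extended deformation by $H(x,t)=\phi_{s(x,t)}(x)$ with $s(x,t)=\sum_n\lambda_n(x)\,r(x,t,n)$, where $r(x,t,n)$ is the time needed for $x$ to reach the $\pi_n$-fiber over $\phi_t(\pi_n(x))$. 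The point is that the reparametrization is governed by the \emph{reference} orbits $\gamma_n\subset F'_i$, whose $\omega$-limits lie in $C_i$, rather than by the (possibly ill-behaved) $\omega$-limit of $x$ itself; this is what makes $H$ continuous on an open neighborhood and forces $H(\cdot,t)$ into $\widetilde{U_i}$ for large $t$. A telescopic argument then glues this with the contraction of $\widetilde{U_i}$. Without this tubular-neighborhood device, the deformation step in your outline remains a genuine gap.
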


\begin{proof}
By Remark~\ref{r:goodcriticalsets}, we can apply Lemma~\ref{l:toplowaproxim}. Since the critical sets are closed and disjoint, there exists a family of mutually disjoint open sets,
$\{U_i\}$  ($i\in\N\cup\{0\}$), where each $U_i$ contains $C_i$, and
such that each $U_i$ is tangentially categorical. We have $\overline{U_i}\subset \widetilde{U_i}$, where $\widetilde{U_i}$ is an \FF-categorical open set that does not contain regular points $p$ with $\omega(p)\in C_j$ for $j>i$. 

Let $\phi$ be the modified gradient flow (see Example~\ref{e:vectorfield}); thus $(X\setminus \Crit_\FF(f),\phi)$ is a $1$-dimensional lamination and we can apply Proposition~\ref{p:lamination in Hilbert space} and Theorem \ref{t:Hilbert Orthogonal} to it. 

Let $p$ be a relative minimum ($p\in C_0$), by the continuity of the gradient flow we can find a foliated chart around $p$ where each plaque has at least a relative minimum, and these relative minima converge to $p$ when the plaques approach the plaque of $p$. Moreover each plaque has only one relative minima if the foliated chart is chosen small enough since, if infinitely many of them have more than one relative minima, then there exists a sequence of points in $C_1(f)$ converging to $p$ by the mountain pass theorem \cite{Jabri}, which contradicts the assumption on the critical sets to be closed. Therefore $C_0(f)$ is an (embedded) open transversal meeting each leaf in a discrete set.  So $U_0$ can be chosen to be a tube around $C_0$; in particular, it tangentially contracts to $C_0$.

Let $U'_0=\bigcup_{n\in\N}\phi_{-n}(U_0)$, which is open since $C_0(f)$ consists of relative minima, and it is tangentially categorical since the flow $\phi$ contracts all the points of $U'_0$ to $C_0$. The set $X_1=X\setminus U'_0$ is closed. The critical set $C_1(f)$ equals the set of relative minima of the restriction $f|_{X_1}$. Notice that $X_1$ consists of critical points outside of $C_0(f)$ and regular points connecting these critical points according to the definition of the relation ``$\ll$''. Let $F_1=U_1\cap X_1$ and $F'_1=\bigcup_{n\in\N}\phi_{-n}(F_1)$. The set $F'_1$ is open in $X_1$ and closed in $X$. The set $F'_1\setminus C_1(f)$ is $\phi$-saturated and closed in $X\setminus \Crit_\FF(f)$. There is an open set $U'_1$ containing $F'_1$ such that there exists a measurable deformation $H$ satisfying $H(U'_1\times\{1\})\subset \widetilde{U_1}$. 

The set $U'_1$ is defined as follows. Observe that the flow lines of $(X\setminus \Crit_\FF(f),\phi)$ are embedded (and not only immersed) in $\mathbb{E}$ by the properties of the gradient flow. We consider $F'_1\setminus C_1(f)$ as a subset of $X\setminus \Crit_\FF(f)$ and embedded in the Hilbert space $\mathbb{E}$. Consider the open subset $V_1=\bigcup_\gamma W(\gamma)$ of $X\setminus \Crit_\FF(f)$, where $\gamma$ runs in the family of flow lines in $F'_1\setminus C_1$ and $W(\gamma)$ is a tubular neighborhood of $\gamma$ provided by Theorem \ref{t:Hilbert Orthogonal}.  By the Lindel\"off property, we can assume that $V_1$ is a countable union of tubular neighborhoods of flow lines: $V_1=\bigcup_{n\in\N}W(\gamma_n)$. Let $\pi_n:W(\gamma_n)\to\gamma_n$ be the projections given by the same Theorem \ref{t:Hilbert Orthogonal}. We can suppose also that the family $(\gamma_n)_{n\in\N}$ is locally finite by paracompactness. Let $\lambda_n:V_1\to[0,1]$, $n\in\N$, be a partition of unity associated to the $\{W(\gamma_n)\}_{n\in\N}$. For each $x\in V_1$, let $I(x)\subset\N$ be the set of numbers $n$ such that $x\in W(\gamma_n)$. The isotopy $\phi|_{F'_1}$ contracts $F'_1$ to $C_1$. We extend the deformation ${\phi}|_{F'_1\setminus C_1(f)}$ to the neighborhood $V_1$. This extension can be defined if we consider our embedding of $X\setminus\Crit_\FF(f)$ in $\mathbb{E}$: for $x\in V_1$, $t\in\R$ and $n\in I(x)$, let $r(x,t,n)$ be the unique positive real number such that $\phi_{r(x,t,n)}(x)=\gamma(x)\cap \pi_n^{-1}(\phi_t(\pi_n(x)))$. Let $:V_1\times\R\to X$ be the continuous map defined by $H(x,t)=\phi_{s(x,t)}(x)$, where
$$
  s(x,t)=\sum_{k\in I(x)}\lambda_k(x)\,r(x,t,k)\;.
$$ 
For $x\in V_1$ and $t\in\R$, there exists $k_1,k_0\in I(x)$ such that $r(x,t,k_1)\leq s(x,t)\leq r(x,t,k_0)$. It is clear that $\lim_{t\to \infty}\phi_{r(x,t,n)}\subset \overline{U_1}\subset\widetilde{U_1}$ for all $n\in\N$.  Let $p\in C_1$ and let $x\in F'_1\setminus C_1$ with $\omega(x)=p$. By the continuity of $\phi$, there exists a neighborhood $U(p)$ of $p\cup\gamma(x)$, a tubular neighborhood $V(\gamma(x))$ of $\gamma(x)$ contained in $V_1$ and $T\in\R$ such that $\phi_{r(y,t,n)}\subset U(p)$ for all $y\in V(\gamma(x))$, $t>T$ and $n\in I(y)$. Therefore  $\lim_{t\to \infty}H(x,t)\subset \overline{\bigcup_{p\in C_1} U(p)}\subset\overline{U_1}\subset\widetilde{U_1}$ for all $x\in\bigcup_{\gamma\subset F'_1\setminus C_1}V(\gamma)\subset V_1$. Then the open subset $V'_1=\bigcup_{\gamma\subset F'_1\setminus C_1}V(\gamma)\subset X$ is $\FF$-categorical (by a standard change of parameter). Finally,  if $\widetilde{U_1}$ is small enough, $U'_1=V'_1\cup \widetilde{U_1}$ is $\FF$-categorical by a telescopic argument \cite{Hatcher}and $F'_1\subset U'_1$.

This process can be done inductively by taking $M_n=M\setminus (U'_0\cup\bigcup_{i=1}^{n-1}F'_i)$, and by using the same trick to define $U'_n$ observing that $C_n(f)$ is the set of relative minima of $M_n$.
\end{proof}

\begin{question}
We can ask if the same results are also true when the critical sets are not closed. We are greatly convinced that the answer is affirmative, but the proof seems to be much more difficult.
\end{question}

\section{More examples and possible applications}

\begin{exmp}[Revisiting a classical result]
One of the most important application of the Lusternik-Schnirelmann category is to give lower estimations of the number of the possible solutions to a variational problem involving a Palais-Smale map (like energy functionals). For instance, the lower estimation for the number of closed geodesics in compact Riemannian manifolds \cite{Lusternik-Schnirelmann,Ballman,Ballman-Thorbergsson-Ziller}.

The general and correct application to this problem was done by W. Klingenberg \cite{Klingenberg}, or W. Ballman, G. Thorbergsson and W. Ziller \cite{Ballman-Thorbergsson-Ziller}. They use Lusternik-Schnirelmann general theory in order to obtain this lower estimation, but in fact they use the cohomology cuplength in their computations. There are two difficulties when we want to apply classical Lusternik-Schnirelmann category to this problem:
\begin{itemize}
\item [(i)] Let $\Theta(M)$ be the Hilbert manifold of parametrized absolutely continuous closed curves with finite energy on $M$. Then different iterations of the same closed geodesic represent different critical points of the energy functional.

\item [(ii)] The action of $O(2)$ on $\Theta(M)$, via linear change of parametrizations, is not free in general, only continuous. Therefore we have to apply Lusternik-Schnirelmann theory to the quotient space $\Theta(M)/O(2)$ of unparametrized closed curves.
\end{itemize}

Now, remember our improvement of the classical Lusternik-Schnirelmann theorem (Theorem~\ref{t:TanCrit}). Let $J:\Theta(M)\to\R$ be the kinetic energy functional, $J(\sg)=1/2\int_{S^1}|\sg'(t)|^2\,dt$, which is Palais-Smale. Unfortunately, its critical points are not topological transversals if we do not consider unparametrized curves. But the action of $O(2)$ preserves the energy and the gradient flow of $J$. Therefore we can adapt our improvement of the classical theorem to this problem when the critical orbits are separated. In this case, our formulation would provide three advantages:
\begin{itemize}
\item [(i)] All critical points in the same critical $O(2)$-orbit belong to the same critical set.

\item [(ii)] Iterated closed geodesics (where the action of $O(2)$ is not free) seem to be in the same critical set of the corresponding prime geodesic.
    
\item [(iii)] Our $\omega$-PS condition is weaker than the usual Palais-Smale condition. Thus we hope to use the improved theorem in suitable subsets of $\Theta(M)$ where the restriction of $J$ is not Palais-Smale in the usual sense.
\end{itemize}

We expect that these advantages could simplify the classical proofs and may improve some results on the general problem of the existence of closed geodesics.
\end{exmp}

\begin{exmp}[Tool to obtain upper bounds of the (tangential) category]
Since $\omega$-PS condition is weaker than the Palais-Smale condition, it is expected that our theorem will provide upper bounds for the tangential and usual LS category. So it is a tool to obtain higher estimations for these categories.
\end{exmp}

\begin{exmp}[Foliated setting]
Considering a usual lamination $\FF$, the space of absolutely continuous closed curves with finite energy on leaves, which is denoted $\Theta(\FF)$. We expect that a structure of  Hilbert lamination can be realized and the energy functional $J:\Theta(\FF)\to\R$ will be $\omega$-PS (but, in general, it would not be Palais-Smale on leaves). Its critical sets give information about the evolution of the closed geodesics on leaves relative to the leafwise gradient flow of $J$. As before, we have a natural action of $O(2)$ with the same properties as above.
\end{exmp}

\begin{exmp}[Hilbert torus]
Let $l^2(\R)=\{(x_n)\in\R\mid \sum_n |x_n|^2<\infty\}$ with the usual Hilbert product and consider the map $\pi:l^2(\R)\to \prod_{n=1}^\infty S^1=T^\infty$, $(x_n)\mapsto (\exp(2\pi i x_n))$. Consider the topology on $Y=\pi(l^2(\R))$ such that $\pi:l^2(\R)\to Y$ is a surjective local homeomorphism; in this way, a structure of Hilbert manifold is induced on $Y$. Even with this topology, $\pi_1(Y)=\bigoplus_{n=1}^\infty \Z$ and the cuplength is infinite, yielding $\Cat(Y)=\infty$.

A Hilbert lamination can be defined with the suspension of the homomorphism $\pi_1(Y)\to\operatorname{Diff}(S^1)$ given by $e_n\mapsto R_n$, where the elements $e_n$ form the standard base of $\bigoplus_{n=1}^\infty \Z$ and $R_n:S^1\to S^1$ is a rotation of the circle. Thus we have a structure of Hilbert lamination on the suspension $X=l^2(\R)\times_{(R_n)}S^1$. Of course, the usual Lebesgue measure on $S^1$ is an invariant measure for this lamination. Since the cuplength of $Y$ is infinite, it is clear that the leafwise cuplength of $X$ is also infinite. Therefore $\Cat(X)=\infty$ (independently of the choice of the rotations $R_n$).

Thus, in this Hilbert lamination, the number of critical sets of any $\omega$-PS map is always infinite.
\end{exmp}

\begin{ack}
This paper contains part of my PhD thesis, whose advisor is Prof.
Jes\'{u}s A. \'{A}lvarez L\'{o}pez.
\end{ack}

\end{document}